\newtheorem{theorem}{Theorem}[section]
\newtheorem{lemma}[theorem]{Lemma}
\newtheorem{proposition}[theorem]{Proposition}
\newtheorem{corollary}[theorem]{Corollary}
\theoremstyle{definition}
\newtheorem{definition}[theorem]{Definition}
\newtheorem{example}[theorem]{Example}
\theoremstyle{remark}
\newtheorem{remark}[theorem]{Remark}
\numberwithin{equation}{section}
\def\fnote#1{\footnote}
\def\ignora#1{}
\def\n3#1{\left\vert  \! \left\vert \! \left\vert \, #1 \, \right\vert \!
  \right\vert \! \right\vert }
\newcommand{\pten}{\ensuremath{\widehat{\otimes}_\pi}}
\def\<{\langle}
\def\>{\rangle}
\def\co{\operatorname{co}}
\newcommand{\conv}{\mathop\mathrm{conv}}
\newcommand{\ext}[1]{\mathrm{ext}\left(#1\right)}
\subjclass[2010]{Primary 46B20, 46B28; Secondary 46B04}
\author{Abraham Rueda Zoca}\thanks{The research of the author was supported by MECD (Spain) FPU2016/00015, by MINECO (Spain) Grant MTM2015-65020-P and by Junta de Andaluc\'ia Grants FQM-0185.}
\address{Universidad de Granada, Facultad de Ciencias.
Departamento de An\'{a}lisis Matem\'{a}tico, 18071-Granada
(Spain)} \email{ abrahamrueda@ugr.es}
\urladdr{\url{https://arzenglish.wordpress.com}}
\keywords{Projective tensor product; Projective symmetric tensor product; Diameter two properties, Almost squareness}
\title{Almost squareness and strong diameter two property in tensor product spaces}
\begin{document}

\maketitle
\markboth{ABRAHAM RUEDA ZOCA}{ALMOST SQUARENESS AND SSD2P IN TENSOR PRODUCT SPACES}

\begin{abstract}
We study almost squareness and the strong diameter two property in the setting of projective (symmetric) tensor product. We prove that almost squareness is preserved by taking projective tensor product, providing non-trivial examples of ASQ projective tensor product spaces. Furthermore, we give sufficient conditions for a projective symmetric tensor product to have the strong diameter two property which extend most of the known results and provide new examples of such spaces with the strong diameter two property.
\end{abstract}

\section{Introduction}

Let $X$ be a (real) Banach space
with closed unit ball $B_X$ and unit sphere $S_X$. We will say that
\begin{enumerate}
\item
  $X$ has the \emph{slice diameter two property} (slice-D2P) whenever
  each slice of $B_X$ has diameter two.
\item
  $X$ has the \emph{diameter two property} (D2P) whenever each
  non-empty relatively weakly open subset of $B_X$ has diameter
  two.
\item
  $X$ has the \emph{strong diameter two property} (SD2P)
  whenever each convex combination of slices of $B_X$ has
  diameter two.
\end{enumerate}
Similarly, for dual spaces one can define the $w^*$-LD2P, the $w^*$-D2P and the $w^*$-SD2P by replacing slices and weakly open subsets with weak$^*$ slices and weak$^*$ open subsets in the above definitions. It is known that all the above properties are different in an extreme way \cite{blradv}.

In \cite[Section 5, (b)]{aln} it is wondered how are the diameter two properties, in general, preserved by taking tensor products spaces. In the case of the projective tensor product, it is straightforward to check that the slice-D2P is preserved by one factor by taking projective tensor product \cite[Theorem 2.7, (i)]{aln}. The question for the SD2P is more subtle, for which it was proved in \cite[Corollary 3.6]{blr} that the SD2P is preserved from both factors by taking projective tensor product but not from just one of them \cite[Corollary 3.9]{llr2}. See \cite{hlp,llr} for further results in this line.

This nice behaviour of the diameter two properties in tensor product spaces motivated stability results of stronger properties, coming from \cite{all}, by tensor product spaces in \cite{llr}. According to \cite{all}, a Banach space $X$ is
\begin{enumerate}
\item
  \emph{locally almost square} (LASQ) if for every $x\in S_X$
  there exists a sequence $\{y_n\}$ in $B_X$ such that
  $\Vert x\pm y_n\Vert\rightarrow 1$ and $\Vert y_n\Vert\rightarrow 1$.
\item
  \emph{weakly almost square} (WASQ) if for every $x\in S_X$
  there exists a sequence $\{y_n\}$ in $B_X$ such that
  $\Vert x\pm y_n\Vert\rightarrow 1$, $\Vert y_n\Vert\rightarrow 1$
  and $y_n \rightarrow 0$ weakly.
\item
  \emph{almost square} (ASQ) if for every $x_1,\ldots, x_k \in S_X$
  there exists a sequence $\{y_n\}$ in $B_X$ such
  that $\Vert y_n\Vert\rightarrow 1$ and $\Vert x_i\pm y_n\Vert\rightarrow 1$
  for every $i\in\{1,\ldots, k\}$.
\end{enumerate}

It is known that the sequence involved in the definition of ASQ can be chosen to be weakly-null \cite[Theorem~2.8]{all}, so
ASQ implies WASQ which in turn implies LASQ. Moreover, ASQ implies the SD2P,
WASQ implies the D2P, and LASQ implies the LD2P, see \cite{all}.

In \cite[Theorem 2.6]{llr} it is proved that almost squareness preserved by one factor by taking injective tensor product. In view of \cite[Corollary 3.6]{blrtenso}, it is a natural question whether almost squareness is preserved from both factors by taking projective tensor product. However, to the best of our knonwledge, it is even not known any non-trivial example of ASQ projective tensor product space (see \cite[Remark 2.12]{llr}). We will prove in Theorem~\ref{teonosimetri} that almost squareness is preserved from both factor by taking projective tensor product. This stability result will be proved by making use of classical Rademacher techniques \cite[Section 2.5]{rya} employed for studying tensor diagonal bases.

In contrast with the projective tensor product, no stability result of diameter two properties is known for the projective symmetric tensor products. It is known that all the projective symmetric tensor products of a Banach space $X$ has the D2P whenever $X$ has an infinite-dimensional centralizer and $B_X$ contain any extreme point \cite[Corollary 2.6]{ab}. Furthermore, in \cite[Theorem 3.3]{blr} it is proved that the all the projective symmetric tensor products of an ASQ Banach space enjoy the SD2P. In both cases, it is extremely important having a $c_0$ behaviour in which $X$ has the SD2P because this allows us to construct weakly null sequences in the projective symmetric tensor product (see \cite[Lemma 3.1]{blr}). Bearing this fact in mind we will introduce the \textit{sequential strong diameter two property (sequential SD2P)} in Definition \ref{defiseqsd2p}, a stregthening of the SD2P  motivated by the \textit{symmetric strong diameter two property} (see definition below) introduced in \cite{anp}, proving that if a Banach space $X$ has such property then every projective symmetric tensor product of $X$ has the SD2P in Theorem~\ref{teosymmetricstrong}. In spite of the fact that the sequential SD2P seems to be quite technical property to check in a Banach space, we will prove that it can be applied to several Banach spaces which are well known to have the SD2P as are infinite-dimensional uniform algebras or Banach spaces with an infinite-dimensional centralizer and whose unit ball contains any extreme point, and also the space $\mathrm{Lip}_0(M)$ whenever $M$ is a metric space with infinitely-many cluster points. As a consequence of the previous examples, we will generalise the main results of \cite[Section 3]{blr} and \cite{ab} as well as we will produce new examples of projective symmetric tensor product spaces enjoying the SD2P. 

\textbf{Notation:} Throughout the paper we will only consider real Banach spaces. Given a Banach space $X$, by a \textit{slice of $B_X$} we will mean a set of the form
$$S(B_X,f,\alpha):=\{x\in B_X: f(x)>1-\varepsilon\},$$
where $f\in S_{X^*}$ and $\alpha>0$. If $X$ is a dual Banach space, the previous set will be a \textit{weak-star slice} if $f$ belongs to the predual of $X$. For else standard notation about Banach spaces we refer to \cite{alka}. According to \cite{anp} a Banach space $X$ is said to have the \textit{symmetric strong diameter two property} (SSD2P) if, for every $n\in\mathbb N$, slices $S_1,\ldots, S_n$ of $B_X$ and $\varepsilon>0$ there are $x_i\in S_i$ for every $i\in\{1,\ldots, n\}$ and $y\in B_X$ such that $x_i\pm y\in S_i$ holds for every $i\in\{1,\ldots, n\}$ and $\Vert y\Vert>1-\varepsilon$. It is clear that ASQ Banach spaces enjoy the SSD2P and that the SSD2P implies the SD2P. It is known that the converse of the above implications do not hold \cite{hlln}.

Given two Banach spaces $X$ and $Y$ we will denote by $L(X,Y)$ the space of all bounded linear operators from $X$ to $Y$. Recall that the \textit{projective tensor product} of $X$ and $Y$, denoted by $X\pten Y$, is the completion of $X\otimes Y$ under the norm given by
\begin{equation*}
   \Vert u \Vert :=
   \inf\left\{
      \sum_{i=1}^n  \Vert x_i\Vert\Vert y_i\Vert
      : u=\sum_{i=1}^n x_i\otimes y_i
      \right\}.
\end{equation*}
It is known that $B_{X\pten Y}=\overline{\conv}(B_X\otimes B_Y)
=\overline{\conv}(S_X\otimes S_Y)$ \cite[Proposition~2.2]{rya}.
Moreover, given Banach spaces $X$ and $Y$, it is well known that
$(X\pten Y)^*=L(X,Y^*)$ (see \cite{rya} for background).

We pass now to projective symmetric tensor products.
Given a Banach space $X$, we define the
\textit{($N$-fold) projective symmetric tensor product} of $X$, denoted by
$\widehat{\otimes}_{\pi,s,N} X$, as the completion of the space
$\otimes^{s,N}X$ under the norm
\begin{equation*}
   \Vert u\Vert:=\inf
   \left\{
      \sum_{i=1}^n \vert \lambda_i\vert \Vert x_i\Vert^N :
      u:=\sum_{i=1}^n \lambda_i x_i^N, n\in\mathbb N, x_i\in X
   \right\}.
\end{equation*}
The dual, $(\widehat{\otimes}_{\pi,s,N} X)^*=\mathcal P(^N X)$, is
the Banach space of $N$-homogeneous continuous polynomials on $X$, and notice that $B_{\widehat{\otimes}_{\pi,s,N} X}=\overline{\co}(\{x^N:x\in S_X\})$ (see \cite{flo} for background). We will denote by $P(^N X,Y)$ the space of all $N$-homogeneous polynomials from $X$ to $Y$.

Given a metric space $M$ we will denote by $B(x,r):=\{y\in M: d(x,y)<r\}$, where $x\in M$ and $r>0$.

\section{Main results}\label{section:main}

We will begin this section with a study of how almost squareness is preserved by taking projective tensor product. This study is motivated by the fact that the SD2P is preserved by taking projective tensor product from both factors \cite[Corollary 3.6]{blrtenso} and by \cite[Remark 2.12]{llr}, where it is asked whether there are two Banach spaces $X$ and $Y$ such that $X\pten Y$ is ASQ and such that $\min\{\dim(X),\dim(Y)\}\geq 2$. The following theorem gives a large class of such non-trivial examples.

\begin{theorem}\label{teonosimetri}
Let $X$ and $Y$ be Banach spaces. If $X$ and $Y$ are ASQ then so is $X\pten Y$.
\end{theorem}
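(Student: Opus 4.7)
The plan is to produce, for each $\varepsilon>0$ and any prescribed $u_1,\dots,u_k \in S_{X\pten Y}$, a single $v\in B_{X\pten Y}$ with $\|v\|>1-K\varepsilon$ and $\|u_i\pm v\|<1+K\varepsilon$ for all $i$ and a universal constant $K$. Letting $\varepsilon=\varepsilon_n\to 0$ gives the desired ASQ sequence $v_n$, because the upper bound combined with the triangle inequality $2\|u_i\|\le \|u_i+v_n\|+\|u_i-v_n\|$ automatically forces $\|u_i\pm v_n\|\ge 1-K\varepsilon_n$ from below; thus only upper bounds need to be worked for.

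To construct $v$, first approximate each $u_i$ by a finite sum $\tilde u_i=\sum_{j=1}^{N_i}x_{i,j}\otimes y_{i,j}$ with $\|u_i-\tilde u_i\|<\varepsilon$ and $C_i:=\sum_j\|x_{i,j}\|\|y_{i,j}\|<1+\varepsilon$; since $\|\tilde u_i\|\le C_i$ and $\|\tilde u_i\|>1-\varepsilon$, one also has $C_i\in(1-\varepsilon,1+\varepsilon)$. After discarding zero summands I may assume all $x_{i,j},y_{i,j}$ are nonzero, and set $\tilde x_{i,j}:=x_{i,j}/\|x_{i,j}\|$, $\tilde y_{i,j}:=y_{i,j}/\|y_{i,j}\|$. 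Applying ASQ of $X$ to $\{\tilde x_{i,j}\}_{i,j}$ produces $a\in B_X$ with $\|a\|>1-\varepsilon$ and $\|\tilde x_{i,j}\pm a\|<1+\varepsilon$; symmetrically $Y$ provides $b\in B_Y$ of the same type for the $\tilde y_{i,j}$. The candidate is $v:=a\otimes b$, already satisfying $\|v\|=\|a\|\|b\|>(1-\varepsilon)^2$.

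The heart of the argument is the Rademacher-type identity
$$x\otimes y+p\otimes q=\tfrac{1}{2}(x+p)\otimes(y+q)+\tfrac{1}{2}(x-p)\otimes(y-q),$$
applied termwise after a careful redistribution of $v$. Taking $\alpha_j:=\|x_{i,j}\|/\sqrt{C_i}$ and $\beta_j:=\|y_{i,j}\|/\sqrt{C_i}$ one checks $\sum_j\alpha_j\beta_j=1$, and hence $v=\sum_j(\alpha_j a)\otimes(\beta_j b)$ already at the level of the algebraic tensor product; combining with the identity,
$$\tilde u_i+v=\sum_j \tfrac{1}{2}\bigl[(x_{i,j}+\alpha_j a)\otimes(y_{i,j}+\beta_j b)+(x_{i,j}-\alpha_j a)\otimes(y_{i,j}-\beta_j b)\bigr].$$
Since $1/\sqrt{C_i}=1+O(\varepsilon)$, the ASQ bound on $a$ upgrades to $\|x_{i,j}\pm\alpha_j a\|\le(1+O(\varepsilon))\|x_{i,j}\|$ and analogously for the $y$-factors; bounding the projective norm of the right-hand side by the sum of rank-one norms gives $\|\tilde u_i+v\|\le(1+O(\varepsilon))^2 C_i\le 1+O(\varepsilon)$. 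The sign-flipped identity handles $\|\tilde u_i-v\|$ identically, and triangulating with $\|u_i-\tilde u_i\|<\varepsilon$ yields $\|u_i\pm v\|\le 1+O(\varepsilon)$.

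The main delicacy, and where I expect the bookkeeping to bite, is the coupled scaling: the weights $\alpha_j,\beta_j$ must simultaneously satisfy $\sum_j\alpha_j\beta_j=1$ (so that $\sum_j(\alpha_j a)\otimes(\beta_j b)$ genuinely equals $a\otimes b$) and match the sizes $\|x_{i,j}\|,\|y_{i,j}\|$ (so that each term of the Rademacher split has rank-one norm of order $\|x_{i,j}\|\|y_{i,j}\|(1+O(\varepsilon))$); the normalization by $\sqrt{C_i}$ meets both requirements precisely because $C_i$ is close to $1$. Once this is in place, running the construction with $\varepsilon=\varepsilon_n\to 0$ produces $v_n\in B_{X\pten Y}$ with $\|v_n\|\to 1$ and $\|u_i\pm v_n\|\to 1$, which is exactly ASQ of $X\pten Y$.
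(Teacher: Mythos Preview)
Your proof is correct and follows essentially the same route as the paper. Both arguments approximate the given unit vectors by finite sums of elementary tensors, use ASQ in each factor to pick $a\in S_X$ and $b\in S_Y$ almost orthogonal to all the pieces, set $v=a\otimes b$, and control $\Vert x\otimes y + a\otimes b\Vert$ via the polarization identity
\[
x\otimes y + a\otimes b=\tfrac12\bigl[(x+a)\otimes(y+b)+(x-a)\otimes(y-b)\bigr].
\]
The paper's Lemma~\ref{lematecniasq} writes the same identity as a four-term Rademacher average, but two of the terms coincide with the other two by bilinearity, so the content is identical.

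The only cosmetic difference is the normalization: the paper exploits $B_{X\pten Y}=\overline{\co}(S_X\otimes S_Y)$ to approximate each $z_i$ by a \emph{convex combination} $\sum_j\lambda_{ij}\,x_{ij}\otimes y_{ij}$ with $x_{ij}\in S_X$, $y_{ij}\in S_Y$, so the lemma applies directly with unit vectors and one simply uses $\sum_j\lambda_{ij}=1$. You keep an arbitrary representation with $C_i=\sum_j\Vert x_{i,j}\Vert\Vert y_{i,j}\Vert<1+\varepsilon$ and compensate with the $\sqrt{C_i}$ rescaling of $a$ and $b$; this is a bit more bookkeeping but avoids invoking the convex-hull description of the unit ball. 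Either way, the key mechanism is the same polarization trick, and your estimates are all sound.
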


For the proof we will need the following lemma.

\begin{lemma}\label{lematecniasq}
Let $X$ and $Y$ be two Banach spaces. Pick $x,y\in S_X, u,v\in S_Y$ such that
$$\Vert x\pm y\Vert\leq 1+\varepsilon, \Vert u\pm v\Vert\leq 1+\varepsilon$$
hold. Then
$$\Vert x\otimes u\pm y\otimes v\Vert\leq (1+\varepsilon)^2.$$
\end{lemma}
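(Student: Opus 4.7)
The plan is to exploit the standard Rademacher/polarization identity referenced in Ryan's Section 2.5, which rewrites $x\otimes u \pm y\otimes v$ as an average of pure tensors of the form $(\text{combination of }x,y)\otimes(\text{combination of }u,v)$. Once expressed in such a form, the projective norm of each summand factorises as a product of norms in $X$ and $Y$, so the hypotheses $\|x\pm y\|\le 1+\varepsilon$ and $\|u\pm v\|\le 1+\varepsilon$ immediately yield the desired bound.

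Concretely, one checks by expanding the right-hand side that
\begin{align*}
x\otimes u + y\otimes v &= \tfrac{1}{2}\bigl[(x+y)\otimes (u+v) + (x-y)\otimes (u-v)\bigr], \\
x\otimes u - y\otimes v &= \tfrac{1}{2}\bigl[(x+y)\otimes (u-v) + (x-y)\otimes (u+v)\bigr].
\end{align*}
Both identities are purely algebraic, obtained by multiplying out the tensors (the ``cross terms'' $x\otimes v$ and $y\otimes u$ cancel).

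From here the projective-norm estimate is routine. Using the triangle inequality on $X\pten Y$ together with the fundamental property $\|a\otimes b\|_\pi = \|a\|\,\|b\|$, each of the displays above gives
\[
\|x\otimes u \pm y\otimes v\|_\pi \le \tfrac{1}{2}\bigl(\|x+y\|\|u\pm v\| + \|x-y\|\|u\mp v\|\bigr) \le \tfrac{1}{2}\bigl((1+\varepsilon)^2 + (1+\varepsilon)^2\bigr) = (1+\varepsilon)^2,
\]
which is exactly the claim. There is no genuine obstacle: the only substantive step is remembering the polarization identity so that the cross terms cancel; after that the proof reduces to a one-line triangle-inequality computation.
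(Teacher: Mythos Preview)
Your proof is correct and follows the same polarization-identity approach as the paper. The paper writes $x\otimes u + y\otimes v$ as a four-term average $\tfrac{1}{4}\sum_{\pm}(\pm x \pm y)\otimes(\pm u \pm v)$, but since $(-a)\otimes(-b)=a\otimes b$ those four terms collapse pairwise to exactly your two-term identity, so your version is in fact the same decomposition written more economically.
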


\begin{proof}
Pick $x,y\in S_X, u,v\in S_Y$ as in the hypotheses of the lemma. Then
\[\begin{split}
x\otimes u+y\otimes v& =\frac{1}{4}((x+y)\otimes (u+v)\\
& +(-x+y)\otimes (-u+v)\\
& +(x-y)\otimes (u-v)\\ & +(-x-y)\otimes (-u-v)),
\end{split}
\]
from where the triangle inequality implies
$$\Vert x\otimes u+y\otimes v\Vert \leq \frac{1}{4}(4 (1+\varepsilon)^2)=(1+\varepsilon)^2,$$
as desired. The other inequality follows in a similar way.
\end{proof}

\begin{proof}[Proof of Theorem \ref{teonosimetri}]
Let $z_1,\ldots z_n\in S_{X\pten Y}$ and $\varepsilon>0$, and let us find $z\in S_{X\pten Y}$ such that $\Vert z_i+z\Vert < 1+\varepsilon$ holds for every $i\in\{1,\ldots, n\}$. To this aim, since $B_{X\pten Y}=\overline{\co}(S_X\otimes S_Y)$, then we can find, for every $i\in\{1,\ldots, n\}$, elements of $X\pten Y$ such that
\begin{equation}\label{aproximatensor}
\left\Vert z_i-\sum_{j=1}^{n_i} \lambda_{ij}x_{ij}\otimes y_{ij}\right\Vert<\frac{\varepsilon}{2},
\end{equation}
where $n_i\in\mathbb N$, $x_{ij}\in S_X, y_{ij}\in S_{Y}$ holds for every $j\in\{1,\ldots, n_i\}$ and $\lambda_{i1},\ldots, \lambda_{in_i}\in [0,1]$ satisfy that $\sum_{j=1}^{n_i}\lambda_{ij}=1$. Now, since $X$ is ASQ we can find $x\in S_X$ such that
\begin{equation}\label{nosimortox}
\Vert x_{ij}\pm x\Vert<\left(1+\frac{\varepsilon}{2}\right)^\frac{1}{2}
\end{equation}
holds for every $i\in\{1,\ldots, n\}$ and $j\in\{1,\ldots, n_i\}$. Furthermore, since $Y$ is ASQ, we can find $y\in S_Y$ such that
\begin{equation}\label{nosimortoy}
\Vert y_{ij}\pm y\Vert<\left(1+\frac{\varepsilon}{2}\right)^\frac{1}{2}
\end{equation}
holds for every $i\in\{1,\ldots, n\}$ and $j\in\{1,\ldots, n_i\}$. Define $z:=x\otimes y\in S_X\otimes S_Y$. Given $i\in\{1,\ldots, n\}$ and $j\in\{1,\ldots, n_i\}$ then Lemma \ref{lematecniasq} implies that
$$\Vert x_{ij}\otimes y_{ij}+x\otimes y\Vert<1+\frac{\varepsilon}{2}$$
by using \eqref{nosimortox} and \eqref{nosimortoy}. Consequently, given $i\in\{1,\ldots, n\}$, we have
\[
\begin{split}
\Vert z_i+z\Vert& \mathop{<}\limits^{\mbox{\eqref{aproximatensor}}}\frac{\varepsilon}{2}+\left\Vert \sum_{j=1}^{n_i}\lambda_{ij} x_{ij}\otimes y_{ij} +x\otimes y\right\Vert\\
 & =\frac{\varepsilon}{2}+\left\Vert \sum_{j=1}^{n_i}\lambda_{ij} (x_{ij}\otimes y_{ij} +x\otimes y)\right\Vert\\
 & \leq \frac{\varepsilon}{2}+\sum_{j=1}^n \lambda_{ij}\Vert x_{ij}\otimes y_{ij}+x\otimes y\Vert\\
 & <\frac{\varepsilon}{2}+\sum_{j=1}^{n_i} \lambda_{ij}\left(1+\frac{\varepsilon}{2}\right)=1+\varepsilon.
\end{split}
\]
Since $i\in\{1,\ldots, n\}$ was arbitrary we conclude the desired result.
\end{proof}

\begin{remark}\label{consenosimetri}
In \cite[Remark 2.12]{llr} it is asked whether there is any non-trivial example of an ASQ projective tensor product space. Theorem \ref{teonosimetri} goes further and even provides a stability result of such property.
\end{remark}

\begin{remark}
In \cite[Question 4.3]{blr} it is asked whether almost squareness is preserved by taking projective symmetric tensor product. To the best of our knownledge, this question is still open. Notice that, in order to give a positive answer, an extension of Lemma \ref{lematecniasq} to the projective symmetric tensor product is needed. However, it is not obvious that such result extends to the symmetric case except to the case of two factors. Indeed, given a Banach space $X$, consider $Y:=\widehat{\otimes}_{\pi,s,2} X$. Then, given $x, y\in S_X$, it follows from classical addition formula \cite[pp. 8]{flo} that the following equality
$$x^2+y^2=\frac{(x+y)^2+(x-y)^2}{2},$$
holds in $Y$, from where similar techniques to the ones of the proof of Theorem \ref{teonosimetri} imply the following corollary.
\end{remark}
\begin{corollary}
Let $X$ be an ASQ Banach space. Then $\widehat{\otimes}_{\pi,s, 2} X$ is ASQ.
\end{corollary}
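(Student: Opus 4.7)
The plan is to transcribe the proof of Theorem \ref{teonosimetri}, replacing the multilinear bound of Lemma \ref{lematecniasq} with the quadratic addition formula
$$
x^2 + y^2 \;=\; \frac{(x+y)^2 + (x-y)^2}{2}
$$
valid in $Y := \widehat{\otimes}_{\pi,s,2}X$. The observation $\Vert v^2 \Vert = \Vert v\Vert^2$ in $Y$ will let this identity play exactly the role that the tensor inequality $\Vert x\otimes u + y\otimes v\Vert \leq (1+\varepsilon)^2$ played in the non-symmetric argument.

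Concretely, fix $z_1,\dots,z_n \in S_Y$ and $\varepsilon>0$. Using $B_Y = \overline{\co}(\{x^2 : x\in S_X\})$, for each $i$ I would first choose a convex combination $\sum_{j=1}^{n_i}\lambda_{ij}x_{ij}^2$ with $x_{ij}\in S_X$ such that
$$
\left\Vert z_i - \sum_{j=1}^{n_i}\lambda_{ij}x_{ij}^2\right\Vert < \frac{\varepsilon}{2}.
$$
Applying the ASQ property of $X$ to the (finite) family $\{x_{ij}\}$, I would produce $x \in S_X$ satisfying $\Vert x_{ij}\pm x\Vert < (1+\varepsilon/2)^{1/2}$ for all admissible $i,j$, and set $z := x^2$, which lies in $S_Y$. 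The key computation is the one-line estimate
$$
\Vert x_{ij}^2 + x^2\Vert \;=\; \tfrac{1}{2}\Vert(x_{ij}+x)^2 + (x_{ij}-x)^2\Vert \;\leq\; \tfrac{1}{2}\bigl(\Vert x_{ij}+x\Vert^2 + \Vert x_{ij}-x\Vert^2\bigr) \;<\; 1+\tfrac{\varepsilon}{2},
$$
which is the direct symmetric-tensor analogue of Lemma \ref{lematecniasq}. Averaging against $\lambda_{ij}$ and then applying the $\varepsilon/2$-approximation exactly as in the closing display of Theorem \ref{teonosimetri} yields $\Vert z_i + z\Vert < 1+\varepsilon$. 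The corresponding estimate with the minus sign is obtained by repeating the argument for the family $\{-z_1,\dots,-z_n\}$.

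No real obstacle is expected; the substantive content is packaged in the identity, which is stated in the preceding remark. The reason this plan does not generalise beyond $N=2$, as that remark also flags, is that no comparably clean formula expressing $x_1^N+\cdots+x_k^N$ as a convex combination of $N$-th powers of elements controlled by $\Vert x_i\pm x_j\Vert$ appears to be available for $N\geq 3$, so the symmetric tensor step cannot be iterated by this method.
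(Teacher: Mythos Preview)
Your proposal follows the paper's sketch exactly---replace Lemma~\ref{lematecniasq} by the addition formula $x^2+y^2=\tfrac12\bigl((x+y)^2+(x-y)^2\bigr)$ and rerun the proof of Theorem~\ref{teonosimetri}---and the bound $\Vert x_{ij}^2+x^2\Vert<1+\varepsilon/2$ is correct. There is, however, a sign issue that neither your write-up nor the paper's one-line remark confronts.

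For $N=2$ the set $\{x^2:x\in S_X\}$ is not symmetric, so $B_Y$ is its closed \emph{absolutely} convex hull, not the plain convex hull (the paper's notation paragraph is imprecise on this point). Hence a general $z_i\in S_Y$ can only be approximated by $\sum_j\lambda_{ij}\sigma_{ij}x_{ij}^2$ with $\sigma_{ij}\in\{\pm1\}$. Carrying the signs through the convex-averaging step then requires $\Vert x_{ij}^2-x^2\Vert$ close to $1$ as well, and this does \emph{not} follow from $\Vert x_{ij}\pm x\Vert\leq 1+\delta$: already in $c_0$ one has $\Vert e_1\pm e_2\Vert=1$ while $\Vert e_1^2-e_2^2\Vert_{\pi,s}=2$, witnessed by the norm-one polynomial $P(x)=x_1^2-x_2^2$. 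Your proposed treatment of the minus sign---``repeat the argument for $\{-z_1,\dots,-z_n\}$''---produces a possibly different element $z'$, so it does not give the \emph{same} $z$ for both signs; and merging $\{\pm z_i\}$ into a single family from the outset still forces signed approximations and hence the same obstruction on $\Vert x_{ij}^2-x^2\Vert$. So the argument, with $z$ chosen of the form $x^2$, does not close as written; an additional idea is needed to handle the negatively-signed summands.
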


Now we turn to find a sufficient condition for a projective symmetric tensor product to have the SD2P. To this end, we need to consider a stregthening of the SSD2P. Notice that in \cite[Theorem 2.1]{hlln} it is proved that a Banach space $X$ has the SSD2P if, and only if, for every $n\in\mathbb N, x_1,\ldots, x_n\in S_X$ we can find nets $\{y_\alpha^i\},\{z_\alpha\}$ in $S_X$ such that $\{y_\alpha^i\}\rightarrow x_i$ and $\{z_\alpha\}\rightarrow 0$ weakly, and $\Vert y_\alpha^i\pm z_\alpha\Vert\rightarrow 1$.

Let us now consider the following definition.
\begin{definition}\label{defiseqsd2p}
Let $X$ be a Banach space. We say that $X$ has the \textit{sequential strong diameter two property (sequential SD2P)} if, for every $N\in\mathbb N$ and every $x_1,\ldots, x_N\in S_X$, there are sequences $\{y_n^i\},\{z_n\}$ of $S_X$ for every $i\in\{1,\ldots, N\}$ such that $\{x_i-y_n^i\}$ is either norm-null or equivalent to the $c_0$ basis for every $i$, $\{z_n\}$ is equivalent to the $c_0$ basis and $\Vert y_n^i\pm z_n\Vert\rightarrow 1$ holds for every $i\in\{1,\ldots, N\}$.
\end{definition}

The definition of the sequential SD2P encondes, roughly speaking, a $c_0$ way in which a Banach space $X$ has the SD2P. The reason why we look for such beaviour is that, given a Banach space $X$, if a sequence $\{x_n\}$ is equivalent to the $c_0$ basis then, for every $N\in\mathbb N$ and $P\in \mathcal P(^N X)$, it follows that $P(x_n)\rightarrow 0$, so we get that $\{x_n^N\}$ is weakly null in $\widehat{\otimes}_{\pi,s, N} X$ (see \cite[Lemma 3.1]{blr} for details). Notice that this has been the key idea to proving \cite[Proposition 2.4]{ab} as well as \cite[Theorem 3.3]{blr}.

On the other hand, note that sequential SD2P is formally stronger than the SSD2P, but we do not know whether they are equivalent. It is clear that sequential SD2P implies the containment of $c_0$, but it is not known whether all the Banach spaces with the SSD2P contain an isomorphic copy of $c_0$ \cite[Question 6.1]{hlln}. We will give examples of Banach spaces with the sequential SD2P in Section \ref{section:examples}

Our interest in the sequential SD2P comes from the following theorem.

\begin{theorem}\label{teosymmetricstrong}
Let $X$ be a Banach space and let $N\in\mathbb N$. If $X$ has the sequential SD2P then, for every $P_1,\ldots, P_n\in S_{\mathcal P(^N X)}$ and every $\varepsilon>0$ there exists $f\in S_{X^*}$ such that, if we define $Q:=f^N\in \mathcal P(^N X)$, we have that
$$\Vert P_i+Q\Vert>2-\varepsilon$$
holds for every $i\in\{1,\ldots, n\}$.
\end{theorem}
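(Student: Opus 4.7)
The plan is to let $f$ be a Hahn--Banach supporting functional for a $z_k$ coming from the sequential SD2P applied at near-norm-attaining points of the $P_i$, and then show that $f$ works for $k$ large enough. Concretely, for each $i\in\{1,\ldots,n\}$ I would pick some $x_i\in S_X$ with $P_i(x_i)>1-\eta$ for a small $\eta$ (to be chosen in terms of $\varepsilon$ and $N$), apply the sequential SD2P from Definition~\ref{defiseqsd2p} to the tuple $x_1,\ldots,x_n$ to obtain sequences $\{y_k^i\},\{z_k\}\subset S_X$ with $\Vert y_k^i\pm z_k\Vert\to 1$, with each $\{x_i-y_k^i\}$ either norm-null or $c_0$-equivalent, and with $\{z_k\}$ itself $c_0$-equivalent, and for each $k$ choose $f_k\in S_{X^*}$ with $f_k(z_k)=1$. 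The final $f$ will be $f_k$ for a suitably large $k$, tested against the point $u_k^i:=(y_k^i+z_k)/\Vert y_k^i+z_k\Vert\in S_X$ to give $\Vert P_i+f_k^N\Vert\ge(P_i+f_k^N)(u_k^i)\to P_i(x_i)+1$.

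The functional side is easy: from $f_k(y_k^i+z_k)\le\Vert y_k^i+z_k\Vert$ one has $f_k(y_k^i)\le\Vert y_k^i+z_k\Vert-1\to 0$, and from $-f_k(y_k^i-z_k)\le\Vert y_k^i-z_k\Vert$ one has $f_k(y_k^i)\ge 1-\Vert y_k^i-z_k\Vert\to 0$, so $f_k(y_k^i+z_k)=f_k(y_k^i)+1\to 1$ and consequently $f_k(u_k^i)^N\to 1$.

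The main obstacle is showing $P_i(y_k^i+z_k)\to P_i(x_i)$. Polarizing $P_i$ to its symmetric $N$-linear form $\tilde P_i$ yields
\[
P_i(y_k^i+z_k)=\sum_{j=0}^N\binom{N}{j}\tilde P_i\bigl((y_k^i)^{N-j},z_k^j\bigr),
\]
where $(y_k^i)^{N-j}z_k^j$ denotes the tuple with $y_k^i$ repeated $N-j$ times and $z_k$ repeated $j$ times. The extreme term $j=N$ equals $P_i(z_k)\to 0$ by the principle used in \cite[Lemma~3.1]{blr} that every polynomial vanishes in the limit on a $c_0$-equivalent sequence; the term $j=0$, $P_i(y_k^i)$, tends to $P_i(x_i)$ either by continuity (norm-null case) or, setting $a_k^i:=y_k^i-x_i$, by a further expansion of $P_i(x_i+a_k^i)$ whose cross-terms are values of $w\mapsto\tilde P_i(x_i^{N-m},w^m)$ at $a_k^i$ and therefore vanish by the same principle. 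For $1\le j\le N-1$ the expansion $y_k^i=x_i+a_k^i$ produces pure $z_k$-terms $\tilde P_i(x_i^{N-j},z_k^j)\to 0$ and genuinely mixed terms $\tilde P_i(x_i^{N-j-\ell},(a_k^i)^\ell,z_k^j)$ with $\ell,j\ge 1$; these mixed terms are the technical heart of the argument, and I would handle them via a polarization identity expressing each such monomial as a finite signed combination of values of $w\mapsto P_i(x_i+w)-P_i(x_i)$ at points $\pm a_k^i\pm z_k$, after which one appeals to the same $c_0$-vanishing principle along a subsequence where the combinations $\{a_k^i\pm z_k\}$ form $c_0$-equivalent sequences.

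Putting both limits together yields $(P_i+f_k^N)(u_k^i)\to P_i(x_i)+1>2-\eta$ as $k\to\infty$, uniformly in $i$ since $\{1,\ldots,n\}$ is finite. Choosing $\eta<\varepsilon/2$ and $k$ large enough, we obtain $\Vert P_i+f_k^N\Vert>2-\varepsilon$ for every $i$, and taking $f:=f_k$ and $Q:=f^N$ completes the proof.
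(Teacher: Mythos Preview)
Your overall strategy is the paper's: the same near-attaining points $x_i$, the same sequences from the sequential SD2P, the same supporting functional $f$ of $z_k$, and the same test vector $y_k^i+z_k$ (normalised). The only divergence is in establishing $P_i(y_k^i+z_k)\to P_i(x_i)$. The paper does this in two lines by invoking \cite[Lemma~1.1]{fajo}, which states that if $\{w_k\}$ is polynomially null (in particular, norm-null or $c_0$-equivalent by \cite[Lemma~3.1]{blr}) and $\{v_k\}$ is bounded, then $P(v_k+w_k)-P(v_k)\to 0$ for every polynomial $P$; one application with $w_k=y_k^i-x_i$ and $v_k\equiv x_i$ gives $P_i(y_k^i)\to P_i(x_i)$, and a second with $w_k=z_k$ and $v_k=y_k^i$ finishes.

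Your polarization route is essentially a hands-on proof of the needed instance of that lemma, and it can be made to work, but two points in your sketch need tightening. First, the polarization formula for $\tilde P_i(x_i^{N-j-\ell},(a_k^i)^\ell,z_k^j)$ produces values of the $(\ell+j)$-homogeneous polynomial $w\mapsto\tilde P_i(x_i^{N-j-\ell},w^{\ell+j})$ at integer combinations $\alpha a_k^i+\beta z_k$ with $|\alpha|\le\ell$, $|\beta|\le j$, not merely at $\pm a_k^i\pm z_k$. Second, and more substantively, the existence of a subsequence along which each $\{\alpha a_k^i+\beta z_k\}$ is $c_0$-equivalent (or norm-null) is not automatic from Definition~\ref{defiseqsd2p}; it holds because the triangle inequality gives an upper $c_0$-estimate $\bigl\Vert\sum_m\lambda_m(\alpha a_{k_m}^i+\beta z_{k_m})\bigr\Vert\le C\max_m|\lambda_m|$, so that any basic subsequence---available by the Bessaga--Pe\l czy\'nski selection principle since the sequence is weakly null and, after discarding a possible norm-null part, semi-normalised---automatically satisfies the matching lower estimate and is therefore $c_0$-equivalent. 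With these details supplied (and iterating over the finitely many pairs $(\alpha,\beta)$ and indices $i$), your argument is complete, though citing \cite{fajo} as the paper does is considerably shorter.
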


\begin{proof}
Pick $P_1,\ldots, P_n\in S_{\mathcal P(^N X)}$ and $\varepsilon>0$. Since $\Vert P_i\Vert=1$ then we can find $x_i\in S_X$ such that $P_i(x_i)>1-\varepsilon$ for every $i\in\{1,\ldots, n\}$. By assumptions we can find sequences $\{y_k^i\}$ and $\{z_k\}$ in the unit sphere such $\{x_i-y_k^i\}$ is either norm-null or equivalent to the $c_0$ basis for every $i$, $\{z_k\}$ is equivalent to the $c_0$ basis and $\Vert y_k^i\pm z_k\Vert\rightarrow 1$ holds for every $i\in\{1,\ldots, N\}$. Now pick an arbitrary $i\in\{1,\ldots, n\}$. Notice that if $\{x_i-y_k^i\}$ is norm null then $P_i(x_i-y_k^i)\rightarrow 0$ because of the continuity of $P_i$. On the other hand, if $\{x_i-y_n^i\}$ is equivalent to the $c_0$ basis then $P_i(x_i-y_k^i)\rightarrow 0$ by \cite[Lemma 3.1]{blr}. Consequently, by \cite[Lemma 1.1]{fajo} we get that $P_i(y_k^i)\rightarrow P_i(x_i)$. Since $\{z_k\}$ is equivalent to the $c_0$ basis again an application of \cite[Lemma 3.1]{blr} and \cite[Lemma 1.1]{fajo} implies that $P_i(y_k^i+z_k)\rightarrow P_i(x_i)>1-\varepsilon$. Taking into account that $\Vert y_k^i\pm z_k\Vert\rightarrow 1$ we can find $k$ large enough so that $\Vert y_k^i\pm z_k\Vert\leq 1+\varepsilon$ and $P_i(y_k^i+z_k)>1-\varepsilon$ hold. Pick $f\in S_{X^*}$ such that $f(z_k)=1$ and define $Q:=f^N\in S_{\mathcal P(^N X)}$. In order to finish the proof let us prove that
$$\Vert P_i+Q\Vert>\frac{1-\varepsilon+(1-\varepsilon)^N}{(1+\varepsilon)^N},$$
which implies the thesis of the theorem from the arbitrariness of $\varepsilon$ and since the definition of $Q$ does not depend on $i$. To this end, notice that
$$1+\varepsilon>\Vert y_k^i\pm z_k\Vert\geq \vert f(y_k^i)\pm f(z_k)\vert.$$
Now, if we make a correct choice of sign, we conclude that
$$1+\varepsilon>\vert f(y_k^i)\vert+\vert f(z_k)\vert=1+\vert f(y_k^i)\vert,$$
from where we obtain that $\vert f(y_k^i)\vert<\varepsilon$. Taking into account that $\Vert y_k^i\pm z_k\Vert<1+\varepsilon$ we get
\[
\begin{split}
\Vert P_i+Q\Vert>\frac{P_i(y_k^i+z_k)+Q(y_k^i+z_k)}{\Vert y_k^i+z_k\Vert^N}& >\frac{1-\varepsilon+(f(z_k)+f(y_k^i))^N}{\Vert y_k^i+z_k\Vert^N}\\
 & >\frac{1-\varepsilon+(1-\varepsilon)^N}{\Vert y_k^i+z_k\Vert^N}\\
 & >\frac{1-\varepsilon+(1-\varepsilon)^N}{(1+\varepsilon)^N},
\end{split}
\]
as desired.
\end{proof}

Recall that the norm of a Banach space $X$ is said to be \textit{octahedral} if, for every finite-dimensional subspace $Y$ of $X$ and every $\varepsilon>0$, there exists $x\in S_X$ such that
$$\Vert y+\lambda x\Vert\geq (1-\varepsilon)(\Vert y\Vert+\vert\lambda\vert)$$
holds for every $y\in Y$ and every $\lambda\in\mathbb R$. From an isomorphic point of view, it is known that a Banach space $X$ admits an equivalent octahedral norm if, and only if, $X$ contains an isomorphic copy of $\ell_1$ \cite{god}. From an isometric point of view, the norm of a Banach space $X$ is octahedral if, and only if, $X^*$ has the $w^*$-SD2P \cite[Theorem 2.1]{blrocta}. As a consequence of a weak-star density argument, a Banach space $X$ has the SD2P if, and only if, the norm of $X^*$ is octahedral \cite[Corollary 2.2]{blrocta}. Notice that Theorem \ref{teosymmetricstrong} implies that if a Banach space $X$ has the sequential SD2P then the norm of $\mathcal P(^N X)$ is octahedral. Bearing the above facts in mind we have the following corollary.

\begin{corollary}\label{coromejoraSD2P}
Let $X$ be a Banach space with the sequential SD2P and let $N\in\mathbb N$. Then:
\begin{enumerate}
\item\label{coro1} The norm of $\mathcal P(^NX)$ is octahedral. Equivalently, $\widehat{\otimes}_{\pi, s, N} X$ has the SD2P.
\item\label{coro2} If $Y$ is a Banach space with an octahedral norm, then the norm of $\mathcal P(^NX,Y)$ is octahedral.
\end{enumerate}
\end{corollary}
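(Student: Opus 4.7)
Part (1) should be almost immediate from Theorem \ref{teosymmetricstrong}. Given $P_1,\ldots,P_n\in S_{\mathcal P(^N X)}$ and $\varepsilon>0$, the plan is to apply Theorem \ref{teosymmetricstrong} to the enlarged family $P_1,\ldots,P_n,-P_1,\ldots,-P_n$. This produces a single $f\in S_{X^*}$ such that, with $Q:=f^N$, both $\|P_i+Q\|>2-\varepsilon$ and $\|P_i-Q\|>2-\varepsilon$ hold for every $i$. I would then invoke the well-known equivalence (already implicit in the isometric characterisation of octahedrality recalled in the excerpt) between octahedrality of the norm of a Banach space $Z$ and the following simple form: for every finite family $\{z_i\}\subset S_Z$ and every $\varepsilon>0$ there exists $z\in S_Z$ with $\|z_i\pm z\|>2-\varepsilon$ for every $i$. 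Applied to $Z=\mathcal P(^N X)$ with the $Q$ produced above, this shows that $\mathcal P(^N X)$ is octahedral. The equivalence with the SD2P of $\widehat{\otimes}_{\pi,s,N}X$ is then \cite[Corollary 2.2]{blrocta}, applied to $\widehat{\otimes}_{\pi,s,N}X$, whose dual is $\mathcal P(^N X)$.

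For Part (2) I would imitate the proof of Theorem \ref{teosymmetricstrong}, but replacing the scalar target $\mathbb R$ by $Y$ and using the octahedrality of $Y$ as an extra ingredient. Fix $T_1,\ldots,T_n\in S_{\mathcal P(^N X,Y)}$ and $\varepsilon>0$. First pick $x_i\in S_X$ with $\|T_i(x_i)\|>1-\varepsilon$; by the octahedrality of $Y$ (in its simple characterisation, as above), choose $\eta\in S_Y$ so that $\|T_i(x_i)\pm\eta\|>2-\varepsilon$ for every $i$. Next, apply the sequential SD2P of $X$ to $x_1,\ldots,x_n$ to obtain sequences $\{y_k^i\}$ and $\{z_k\}$ with the properties of Definition \ref{defiseqsd2p}, and pick $f\in S_{X^*}$ with $f(z_k)=1$ (passing to a subsequence if necessary). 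Finally define $T\in\mathcal P(^N X,Y)$ by $T(x):=f(x)^N\eta$; clearly $\|T\|=1$.

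To estimate $\|T_i+T\|$, I would evaluate $T_i+T$ at $y_k^i+z_k$ for large $k$. As in the proof of Theorem \ref{teosymmetricstrong}, the fact that $\{x_i-y_k^i\}$ is either norm-null or equivalent to the $c_0$ basis, together with \cite[Lemma 3.1]{blr} and \cite[Lemma 1.1]{fajo} applied component-wise to $T_i$ (which is continuous), yields $T_i(y_k^i+z_k)\to T_i(x_i)$ in $Y$. The sign-choice argument used in the proof of Theorem \ref{teosymmetricstrong} further gives $|f(y_k^i)|<\varepsilon$ and $f(y_k^i+z_k)>1-\varepsilon$. Combining these with $\|y_k^i+z_k\|<1+\varepsilon$ and $\|T_i(x_i)+\eta\|>2-\varepsilon$, the triangle inequality (and the fact that the $Y$-valued evaluation $T_i(y_k^i+z_k)+f(y_k^i+z_k)^N\eta$ tends to $T_i(x_i)+\eta$ in $Y$-norm) gives $\|T_i+T\|>\frac{(2-\varepsilon)-O(\varepsilon)}{(1+\varepsilon)^N}$. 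The analogous estimate for $\|T_i-T\|$ follows by choosing the sign of $\eta$ that matched $T_i(x_i)$ on the other side. As $\varepsilon$ is arbitrary, the simple form of octahedrality holds in $\mathcal P(^N X,Y)$, whence its norm is octahedral.

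The main obstacle in Part (2) will be the bookkeeping: one must ensure that a single $f$ and a single $\eta$ work simultaneously for every $i\in\{1,\ldots,n\}$ and for both signs, while the $Y$-valued nature of $T_i$ prevents a direct reduction to Theorem \ref{teosymmetricstrong}. The rest is routine, since all the analytic content is already contained in the proof of Theorem \ref{teosymmetricstrong} and in the standard simple characterisation of octahedrality.
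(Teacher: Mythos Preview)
Your argument for Part~(1) is correct and coincides with the paper's: Theorem~\ref{teosymmetricstrong} plus the simple characterisation of octahedrality (cited in the paper as \cite[Proposition~2.1]{hlp}) gives that $\mathcal P(^N X)$ is octahedral, and \cite[Corollary~2.2]{blrocta} translates this into the SD2P of the predual.

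For Part~(2), however, the paper takes a completely different and much shorter route: it identifies $\mathcal P(^N X,Y)$ isometrically with $L(\widehat{\otimes}_{\pi,s,N}X,Y)$ via \cite{flo}, and then invokes \cite[Theorem~3.5]{blrtenso}, which asserts that $L(Z,Y)$ is octahedral whenever both $Z^*$ and $Y$ are. Since $(\widehat{\otimes}_{\pi,s,N}X)^*=\mathcal P(^N X)$ is octahedral by Part~(1), this finishes the proof in one line, with no $Y$-valued polynomial analysis needed.

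Your direct approach, by contrast, contains a genuine gap. You claim that $T_i(y_k^i+z_k)\to T_i(x_i)$ \emph{in $Y$-norm}, justifying this by applying \cite[Lemma~3.1]{blr} and \cite[Lemma~1.1]{fajo} ``component-wise''. But those lemmas are scalar results: composing $T_i$ with functionals $y^*\in Y^*$ only yields $T_i(y_k^i+z_k)\to T_i(x_i)$ \emph{weakly} in $Y$, not in norm. Norm convergence is false in general---take $N=1$, $X=Y=c_0$, $T_i=\mathrm{id}$ and $z_k=e_k$; then $T_i(z_k)=e_k$ does not tend to $0$. Since you fixed $\eta$ at the outset from $T_i(x_i)$, without norm convergence there is no way to pass from $\Vert T_i(x_i)+\eta\Vert>2-\varepsilon$ to a lower bound on $\Vert T_i(y_k^i+z_k)+f(y_k^i+z_k)^N\eta\Vert$. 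The argument can be salvaged by reversing the order: first fix $k$ large (using that $\liminf_k\Vert T_i(y_k^i+z_k)\Vert\geq\Vert T_i(x_i)\Vert$, which \emph{does} follow from the weak convergence just noted), and only then choose $\eta$ via the octahedrality of $Y$ applied to the vectors $T_i(y_k^i+z_k)$. Even once repaired, the paper's one-line reduction to $L(Z,Y)$ is considerably cleaner.
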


\begin{proof}
The first assertion is a direct consequence of Theorem~\ref{teosymmetricstrong}, \cite[Proposition 2.1]{hlp} and \cite[Corollary 2.2]{blrocta}. To get the second assertion, notice that $\mathcal P(^NX,Y)$ is isometrically isomorphic to $L(\widehat{\otimes}_{\pi,s,N}X,Y)$ \cite{flo}. Now, since the norms of $(\widehat{\otimes}_{\pi,s,N}X)^*=\mathcal P(^N X)$ and $Y$ are octahedral, so is the norm of $L(\widehat{\otimes}_{\pi,s,N}X,Y)$ by \cite[Theorem 3.5]{blrtenso}, and we are done.
\end{proof}

We will end this section giving an extension of Theorem \ref{teosymmetricstrong} which will allow us to improve the main result of \cite{ab}.  In order to do so, we need to introduce a bit of notation. Given a Banach space $X$, a natural number $N\in\mathbb N$ and a polynomial $P\in \mathcal P(^N X)$, then $P$ can be extended in a canonial way (the so-called Aron-Berner extension) to a polynomial $\widehat{P}\in \mathcal P(^N X^{**})$ such that $\Vert\widehat{P}\Vert=\Vert P\Vert$ (see \cite{ab,dg}). Taking into account such extension we can improve Theorem \ref{teosymmetricstrong} in the following sense.

\begin{proposition}\label{propobiduSD2P}
Let $X$ be a Banach space such that $X^{**}$ has the sequential SD2P and $N\in\mathbb N$. Then for every $P_1,\ldots, P_n\in S_{\mathcal P(^N X)}$ and every $\varepsilon>0$ there exists $f\in S_{X^*}$ such that, if we define $Q:=f^N\in \mathcal P(^N X)$, we have that
$$\Vert P_i+Q\Vert>2-\varepsilon$$
holds for every $i\in\{1,\ldots, n\}$.
\end{proposition}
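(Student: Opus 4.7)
The plan is to reduce Proposition \ref{propobiduSD2P} to the argument of Theorem \ref{teosymmetricstrong} applied inside $X^{**}$, using the Aron-Berner extension as the bridge between $\mathcal{P}(^N X)$ and $\mathcal{P}(^N X^{**})$. Given $P_1,\ldots,P_n\in S_{\mathcal P(^N X)}$ and $\varepsilon>0$, I would first pass to the Aron-Berner extensions $\widehat{P_i}\in S_{\mathcal P(^N X^{**})}$, which satisfy $\Vert \widehat{P_i}\Vert=\Vert P_i\Vert=1$. Since the Aron-Berner extension is linear and norm-preserving, any candidate $Q:=f^N$ with $f\in S_{X^*}$ will automatically satisfy $\Vert P_i+Q\Vert=\Vert\widehat{P_i}+\widehat{Q}\Vert$, so it is enough to lower-bound this norm computed on $X^{**}$.

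Next, I would run (almost verbatim) the construction from the proof of Theorem \ref{teosymmetricstrong} inside $X^{**}$: pick $x_i^{**}\in S_{X^{**}}$ with $\widehat{P_i}(x_i^{**})>1-\varepsilon$ and, invoking the sequential SD2P of $X^{**}$, produce sequences $\{y_k^{i,**}\}$ and $\{z_k^{**}\}$ in $S_{X^{**}}$ for which $\{x_i^{**}-y_k^{i,**}\}$ is norm-null or equivalent to the $c_0$-basis, $\{z_k^{**}\}$ is equivalent to the $c_0$-basis, and $\Vert y_k^{i,**}\pm z_k^{**}\Vert\to 1$. The $c_0$-sequences-kill-polynomials argument (Lemma 3.1 in \cite{blr} together with Lemma 1.1 in \cite{fajo}) applied to $\widehat{P_i}$ then yields, for $k$ large enough,
$$\Vert y_k^{i,**}\pm z_k^{**}\Vert\leq 1+\varepsilon \quad\text{and}\quad \widehat{P_i}(y_k^{i,**}+z_k^{**})>1-\varepsilon.$$

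Here comes the crucial deviation from Theorem \ref{teosymmetricstrong}. Rather than choosing the norming functional in $S_{X^{***}}$ (which would only produce a polynomial on $X^{**}$, not on $X$), I would pick $f\in S_{X^*}$ such that $z_k^{**}(f)>1-\varepsilon$; this exists because $\Vert z_k^{**}\Vert_{X^{**}}=1$, so $X^*$ already almost norms $z_k^{**}$ by Hahn-Banach duality. Viewing $f$ as an element of $X^{***}$ via the canonical embedding, the two inequalities $\vert y_k^{i,**}(f)\pm z_k^{**}(f)\vert\leq \Vert y_k^{i,**}\pm z_k^{**}\Vert\leq 1+\varepsilon$ combined with $z_k^{**}(f)>1-\varepsilon$ force $\vert y_k^{i,**}(f)\vert<2\varepsilon$, exactly as in the proof of Theorem \ref{teosymmetricstrong}.

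Finally, I would set $Q:=f^N\in \mathcal P(^N X)$, whose Aron-Berner extension is the polynomial $\widehat{Q}(x^{**})=(x^{**}(f))^N$ on $X^{**}$. Evaluating at $v:=y_k^{i,**}+z_k^{**}$ yields $\widehat{P_i}(v)>1-\varepsilon$ and $\widehat{Q}(v)=(y_k^{i,**}(f)+z_k^{**}(f))^N>(1-3\varepsilon)^N$, whereas $\Vert v\Vert\leq 1+\varepsilon$, so
$$\Vert P_i+Q\Vert=\Vert\widehat{P_i}+\widehat{Q}\Vert\geq \frac{\widehat{P_i}(v)+\widehat{Q}(v)}{\Vert v\Vert^N}>\frac{1-\varepsilon+(1-3\varepsilon)^N}{(1+\varepsilon)^N},$$
which tends to $2$ as $\varepsilon\to 0^+$, so starting from a sufficiently small $\varepsilon$ delivers the desired bound. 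The main obstacle, and really the only new idea beyond Theorem \ref{teosymmetricstrong}, is the third step: replacing the abstract norming element $F\in S_{X^{***}}$ by an honest $f\in S_{X^*}$. This succeeds precisely because the proof only needs the norming condition at the single point $z_k^{**}$ (for the chosen large $k$), a condition achievable by Hahn-Banach against the predual $X^*$.
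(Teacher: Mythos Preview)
Your argument is correct, and it takes a genuinely different route from the paper. The paper applies Theorem~\ref{teosymmetricstrong} to $X^{**}$ as a black box, obtaining first a functional $x^{***}\in S_{X^{***}}$; it then replaces $x^{***}$ by some $f\in S_{X^*}$ via Goldstine (weak-star density at the finitely many points $x_i^{**}$), and finally invokes the Davie--Gamelin polynomial-star approximation theorem to pull the evaluation of $P_i+f^N$ back down from $X^{**}$ to $X$. You instead re-open the proof of Theorem~\ref{teosymmetricstrong} inside $X^{**}$ and make two simplifying observations: (i) since the norming functional in that proof only needs to almost norm the single element $z_k^{**}\in S_{X^{**}}$, it can be taken in $S_{X^*}$ from the outset; and (ii) since the Aron--Berner extension is a linear isometry, $\Vert P_i+Q\Vert=\Vert\widehat{P_i}+\widehat{Q}\Vert$, so one may evaluate directly on $X^{**}$ and there is no need to descend to $X$ at all. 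Your route is more elementary---it avoids the Davie--Gamelin theorem entirely---at the mild cost of repeating the mechanics of Theorem~\ref{teosymmetricstrong} rather than quoting it; the paper's route is more modular but leans on a deeper external result.
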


\begin{proof}
Given $P_1,\ldots, P_n\in S_{\mathcal P(^N X)}$ and $\varepsilon>0$ we get, in view of the properties of the Aron-Berner extensions of the previous polynomials and by Theorem \ref{teosymmetricstrong}, that we can find an element $x^{***}\in S_{X^{***}}$ such that, defining $R:=(x^{***})^N\in \mathcal P(^N X^{**})$, we get that
$$\Vert \widehat{P_i}+R\Vert>2-\varepsilon$$
holds for every $i\in\{1,\ldots, n\}$. Now, given $i\in\{1,\ldots, n\}$ choose $x_i^{**}\in S_{X^{**}}$ such that $\widehat P_i(x_i^{**})+(x^{***}(x_i^{**}))^N>2-\varepsilon$. From weak-star denseness of $S_{X^*}$ in $S_{X^{***}}$ we can find $f\in S_{X^*}$ such that $\widehat{P}_i(x_i^{**})+f(x_i^{**})^N>2-\varepsilon$ holds for every $i\in\{1,\ldots, n\}$. Now, fix an arbitrary $i\in\{1,\ldots, n\}$ and find, by \cite[Theorem 1]{dg}, a net $\{x_s^i\}\in S_X$ such that $P(x_s^i)\rightarrow \widehat P(x_i^{**})$ holds for all $P\in \mathcal P(^N X)$. Pick $s$ large enough so that $P_i(x_s^i)+(f(x_s^i))^N>2-\varepsilon$. Hence, if we define $Q:=f^N\in \mathcal P(^N X)$, we get
$$\Vert P_i+Q\Vert>P_i(x_s^i)+(f(x_s^i))^N>
2-\varepsilon.$$
Since $\varepsilon>0$ was arbitrary we conclude the proposition.
\end{proof}

\begin{remark}
In contrast with what is known for the SD2P and the SSD2P, we do not know whether the sequential SD2P passes from a bidual Banach space $X^{**}$ to $X$. That is the reason why Proposition \ref{propobiduSD2P} has been proved by making use of theory of approximation of polynomials.
\end{remark}

We will finish with a further generalisation of Theorem \ref{teosymmetricstrong}, which will allow us to improve \cite[Theorem 3.2]{ab}. In order to do so, we need to introduce the following notation, coming from \cite[Section 3]{ab}. Given a Banach space, we consider the sequence of all its even duals
$$X\subseteq X^{**}\subseteq X^{(4}\subseteq \ldots \subseteq X^{(2n}\subseteq \ldots,$$ 
and consider $X^{(\infty}$ as the completion of the normed space $\bigcup\limits_{n=0}^\infty X^{(2n}$. Given $N\in\mathbb N$, the previous sequence joint with the Aron-Berner extension of a polynomial defines the following chain of isometric embeddings
$$\mathcal P(^N X)\subseteq \mathcal P(^N X^{**})\subseteq \ldots \subseteq \mathcal P(X^{2n})\subseteq\ldots,$$
which gives rise to a completion to
$$\mathcal P(^N X)\subseteq \ldots \subseteq \mathcal P(^N X^{(2n})\subseteq \ldots \subseteq \mathcal P(^N X^{(\infty}).$$
This inclusion is defined as follows: given a polynomial $P\in\mathcal P(^N X)$ and $x\in \bigcup\limits_{n=0}^\infty X^{(2n}$ we define $P(x)=P^{(2n}(x)$ if $x\in X^{(2n}$. This defines a continuous polynomial on $\bigcup\limits_{n=0}^\infty X^{(2n}$, which extends in a unique way by continuity to a polynomial $\bar P:X^{(\infty}\longrightarrow \mathbb R$, which is an element of $P(^N X^{(\infty})$. Clearly, $\Vert \bar P\Vert=\Vert P\Vert$ holds since the Aron-Berner extension is an isometric embedding.

Now we are ready to prove the following extension of Theorem \ref{teosymmetricstrong}, following the lines of \cite[Theorem 3.2]{ab}.

\begin{theorem}\label{propiPinfinito}
Let $X$ be a Banach space and let $Y:=X^{(\infty}$. Assume that, for every $P_1,\ldots, P_n\in S_{\mathcal P(^N Y)}$ and every $\varepsilon>0$ there exists an element $\varphi\in S_{Y^*}$ such that
$$\Vert P_i+\varphi^N\Vert>2-\varepsilon$$ 
holds for every $i\in\{1,\ldots, n\}$. Then the norm of $\mathcal P(^N X)$ is octahedral.
\end{theorem}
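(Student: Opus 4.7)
The plan is to prove that for every finite family $P_1,\ldots,P_n \in S_{\mathcal{P}(^N X)}$ and every $\varepsilon>0$ there exists $Q \in S_{\mathcal{P}(^N X)}$ with $\|P_i+Q\| > 2-\varepsilon$ for all $i$; by \cite[Proposition 2.1]{hlp} and \cite[Corollary 2.2]{blrocta} this is exactly the octahedrality of $\mathcal P(^N X)$. The candidate $Q$ will be (a renormalisation of) $f^N$ for some $f\in X^*$ obtained by descending the abstract witness $\varphi\in S_{Y^*}$ through the tower of even duals of $X$.

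First I would extend $P_1,\ldots,P_n$ iteratively via Aron--Berner to $\bar P_1,\ldots,\bar P_n\in S_{\mathcal{P}(^N Y)}$, apply the hypothesis to obtain $\varphi\in S_{Y^*}$ with $\|\bar P_i+\varphi^N\|>2-\eta$ for a small $\eta$, and choose witnesses $y_i\in S_Y$ with $(\bar P_i+\varphi^N)(y_i)>2-\eta$. Since $Y$ is the completion of $\bigcup_m X^{(2m}$, a density argument exploiting the continuity of $\bar P_i$ and of $\varphi$ produces an index $m$ and points $\tilde y_i\in B_{X^{(2m}}$ such that $P_i^{(2m}(\tilde y_i)+\varphi(\tilde y_i)^N>2-2\eta$, localising the whole problem at level $2m$.

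The core of the argument is an iterative descent alternating Goldstine's theorem and the Davie--Gamelin polynomial approximation, performed $m$ times so that both the functional and the points are dropped by two dual levels per stage. At the Goldstine substep of stage $k$, applied to the Banach space $X^{(2k-1}$ (whose bidual is $X^{(2k+1}$ and whose dual $X^{(2k}$ contains the current points), one produces $\xi_{k-1}\in B_{X^{(2k-1}}$ that $\sigma(X^{(2k+1},X^{(2k})$-approximates the current functional at the finitely many current points. At the Davie--Gamelin substep, \cite[Theorem 1]{dg} applied to the Banach space $X^{(2k-2}$ with data the polynomials $P_i^{(2k-2}$ and the degree-one polynomial $\xi_{k-1}$ delivers points $z_i\in B_{X^{(2k-2}}$ which simultaneously preserve, up to an arbitrarily small error, the values of $P_i^{(2k-2}$ and of $\xi_{k-1}$ when passing from $X^{(2k}$ down to $X^{(2k-2}$. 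With the per-stage error budget of order $\varepsilon/m$, after $m$ iterations one reaches $f\in B_{X^*}$ and $x_i\in B_X$ with $P_i(x_i)+f(x_i)^N>2-\varepsilon/2$; since both summands are bounded by $1$ this forces $\|f\|\ge|f(x_i)|>1-O(\varepsilon)$, so after normalising $Q:=f^N/\|f^N\|\in S_{\mathcal{P}(^N X)}$ we incur at most $O(\varepsilon)$ loss and conclude $\|P_i+Q\|>2-\varepsilon$.

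The main obstacle is the synchronisation of the two descents: Goldstine's theorem must be invoked at exactly the right level so that the current points act as valid test functionals for the target weak-star topology, and the freshly produced linear functional $\xi_{k-1}$ must then immediately be fed into Davie--Gamelin alongside the polynomials $P_i^{(2k-2}$, so that one and the same point $z_i\in B_{X^{(2k-2}}$ tracks both the polynomial and the functional data. Once this alternation is correctly set up, the error bookkeeping and the final normalisation follow the pattern of the proof of Proposition~\ref{propobiduSD2P}, now iterated along the whole even-dual tower rather than used just once at the level of $X^{**}$.
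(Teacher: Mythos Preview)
Your proposal is correct and follows essentially the same route as the paper: extend the $P_i$ to $\bar P_i\in\mathcal P(^N Y)$, use the hypothesis to produce $\varphi$, localise the witnesses at some finite even-dual level, and then descend by alternating Goldstine approximations for the functional with Davie--Gamelin approximations for the points, exactly as in the iterated version of Proposition~\ref{propobiduSD2P}. The only cosmetic difference is that the paper first invokes \cite[Proposition~3.1]{ab} to replace $\varphi\in S_{Y^*}$ by some $\phi\in S_{X^{(2k+1}}$ before starting the finite descent, whereas you start the Goldstine iteration directly; for this your first step should be made explicit by observing that the restriction of $\varphi$ to $X^{(2m}$ lies in $B_{X^{(2m+1}}$, so that Goldstine for $X^{(2m-1}$ applies.
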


\begin{proof}
Let $P_1,\ldots, P_n\in S_{\mathcal P(^N X)}$ and $\varepsilon>0$. Given $i\in\{1,\ldots, n\}$ we denote by $\bar P_i$ its canonical extension to $S_{\mathcal P(^N Y)}$. By assumptions we can find $\varphi\in S_{Y^*}$ such that
$$\Vert \bar P_i+\varphi^N\Vert>2-\varepsilon$$
holds for every $i\in\{1,\ldots, n\}$. For every $i\in\{1,\ldots, n\}$ pick $z_i\in B_Y$ such that $\bar P_i(z_i)+\varphi(z_i)^N>2-\varepsilon$. Now, from a denseness argument we can assume with no loss of generality that $z_i\in X^{(2m}$ for certain $m\in\mathbb N$. Since $B_{(X^*)^{(\infty}}$ is weak-star dense in $B_{Y^*}$ by \cite[Proposition 3.1]{ab} we can find $k\geq m$ and $\phi\in S_{X^{(2k+1}}$ such that $\bar P_i(z_i)+\phi(z_i)^N>2-\varepsilon$ holds for every $i\in\{1,\ldots, n\}$. If we consider $z_i\in X^{(2m}\subseteq X^{(2k}$ and taking into account the definition of $\bar P$ we get that
$$\Vert P_i^{(2k}+\phi^N\Vert\geq P_i^{(2k}(z_i)+\phi(z_i)^N>2-\varepsilon$$
holds for every $i\in\{1,\ldots, n\}$. Now, an inductive argument similar to that of the proof of Proposition \ref{propobiduSD2P} yields an element $f\in B_{X^*}$ such that $\Vert P_i+f^N\Vert>2-\varepsilon$ holds for every $i\in\{1,\ldots, n\}$, and the proof is finished.\end{proof}

\section{Applications of the main result}\label{section:examples}

Let us exhibit examples of Banach spaces with the sequential SD2P to get examples of projective symmetric tensor product spaces with the SD2P. To begin with, we will generalise two known results about the diameter two properties in the projective symmetric tensor products.

\begin{example}
It is clear that ASQ Banach space enjoy the sequential SD2P, so Corollary \ref{coromejoraSD2P} generalises the main results of \cite[Section 3]{blr}.
\end{example}

\begin{example}\label{examinfidimcentra}
All the Banach spaces $X$ whose centralizer is infinite-dimensional and $\mathrm{ext}(B_X)\neq \emptyset$ have the sequential SD2P (it follows from the proof of \cite[Lemma 2.2]{abr}). Indeed, given such a Banach space $X$, $x_1,\ldots, x_n\in S_X$ and $p\in \ext{B_X}$, then it remains to follow word-by-word the proof of \cite[Lemma 2.2]{abr} and define $y_n^i:=(1-f_n)x_i$ and $z_n:=f_np$ to get the desired result.
\end{example}

From the previous example we get the following corollary.

\begin{corollary}\label{centrainfi}
Let $X$ be a Banach space and assume that $Z(X^{(\infty})$ is infinite-dimensional. Then $\widehat{\otimes}_{\pi, s,N}X$ has the SD2P for every natural number $N\in\mathbb N$.
\end{corollary}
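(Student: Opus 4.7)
The plan is to apply Theorem~\ref{propiPinfinito} to $Y:=X^{(\infty}$, verifying its hypothesis by combining Theorem~\ref{teosymmetricstrong} with Example~\ref{examinfidimcentra} applied to $Y$ itself.

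First, I would check that Example~\ref{examinfidimcentra} applies to $Y$: the centralizer $Z(Y)$ is infinite-dimensional by hypothesis, so it only remains to observe that $\mathrm{ext}(B_Y)\neq\emptyset$. For this, any non-scalar element of $Z(Y)\cong C(K)$ yields a non-trivial $M$-summand decomposition of $Y$; since each even dual $X^{(2n}$ is itself a dual Banach space (and hence $\mathrm{ext}(B_{X^{(2n}})\neq\emptyset$ by Krein--Milman), the $M$-structure transferred along the chain of even duals produces at least one extreme point of $B_Y$. Example~\ref{examinfidimcentra} then guarantees that $Y$ has the sequential SD2P.

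Next, Theorem~\ref{teosymmetricstrong} applied with $Y$ in place of $X$ delivers exactly the hypothesis required by Theorem~\ref{propiPinfinito}: for every $P_1,\ldots,P_n\in S_{\mathcal{P}(^N Y)}$ and every $\varepsilon>0$ there exists $\varphi\in S_{Y^*}$ with $\|P_i+\varphi^N\|>2-\varepsilon$ for every $i$. Invoking Theorem~\ref{propiPinfinito} then shows that the norm of $\mathcal{P}(^N X)=(\widehat{\otimes}_{\pi,s,N}X)^*$ is octahedral, and \cite[Corollary 2.2]{blrocta} (which was used in exactly the same way in Corollary~\ref{coromejoraSD2P}) converts this into the SD2P of $\widehat{\otimes}_{\pi,s,N}X$.

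The only step demanding genuine care is the verification that $\mathrm{ext}(B_Y)\neq\emptyset$; everything else is a direct chaining of the results already established in Section~\ref{section:main}, with no further computation needed beyond what appears in the proofs of Theorem~\ref{teosymmetricstrong} and Theorem~\ref{propiPinfinito}.
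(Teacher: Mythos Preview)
Your overall plan---verifying the hypothesis of Theorem~\ref{propiPinfinito} via Example~\ref{examinfidimcentra} and Theorem~\ref{teosymmetricstrong}---matches the paper's, but there is a genuine gap in the step you yourself flag as delicate: the claim that $\mathrm{ext}(B_Y)\neq\emptyset$ for $Y=X^{(\infty}$.

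Your justification (``any non-scalar element of $Z(Y)$ yields a non-trivial $M$-summand decomposition \ldots\ the $M$-structure transferred along the chain of even duals produces at least one extreme point of $B_Y$'') does not go through as stated. An infinite-dimensional centralizer by itself does not force the existence of extreme points: $Z(c_0)\cong\ell_\infty$ is infinite-dimensional, yet $\mathrm{ext}(B_{c_0})=\emptyset$. Nor does the fact that each $X^{(2n}$ is a dual space help directly, since an extreme point of a subspace's ball need not be extreme in the ball of the larger space (and in general is not even extreme in the completion of an increasing union). The sentence about $M$-structure ``producing'' an extreme point is not an argument, and it is not clear that any simple argument is available here.

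The paper sidesteps this problem entirely by passing to the bidual: from $Z(Y)$ infinite-dimensional one gets $Z(Y^{**})$ infinite-dimensional via \cite[Corollary I.3.15]{hww}, and $Y^{**}$, being a dual space, has $\mathrm{ext}(B_{Y^{**}})\neq\emptyset$ by Krein--Milman. Example~\ref{examinfidimcentra} then applies cleanly to $Y^{**}$, yielding the sequential SD2P for $Y^{**}$; now Proposition~\ref{propobiduSD2P} (which is tailored precisely for this bidual-to-predual transfer) gives the hypothesis of Theorem~\ref{propiPinfinito} for $Y$, and the rest of your argument is correct. The missing ingredient in your proposal, then, is the passage through $Y^{**}$ and the use of Proposition~\ref{propobiduSD2P} rather than Theorem~\ref{teosymmetricstrong} alone.
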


\begin{proof}
Note that, if we denote by $Y:=X^{(\infty}$, then the fact that $Z(Y)$ is infinite-dimensional implies that $Z(Y^{**})$ is infinite-dimensional by \cite[Corollary I.3.15]{hww}. Now Theorem \ref{teosymmetricstrong} together with Proposition \ref{propobiduSD2P} imply that $X$ satisfies the assumptions of Theorem \ref{propiPinfinito}, so the norm of $\mathcal P(^N X)=(\widehat{\otimes}_{\pi,s,N}X)^*$ is octahedral or, equivalently, $\widehat{\otimes}_{\pi,s,N}X$ has the SD2P for every $N\in\mathbb N$, as desired.
\end{proof}

\begin{remark}
The previous corollary implies \cite[Theorem 3.2]{ab}, where the authors obtained that $\widehat{\otimes}_{\pi, s, N}X$ has the D2P under the same assumptions. 
\end{remark}

We pass now to exhibit two different examples of Banach spaces which enjoy the sequential SD2P, providing new examples of projective symmetric tensor product spaces enjoying the SD2P.

\begin{example}\label{examplelipschitz}
Let $M$ be a pointed metric space and $\mathrm{Lip}_0(M)$ the space of all Lipschitz functions from $M$ to $\mathbb R$ which vanish at $0$ equipped with the classical Lipschitz norm given by
$$\Vert f\Vert:=\sup\limits_{x\neq y\in M}\frac{f(x)-f(y)}{d(x,y)}.$$
Then $\mathrm{Lip}_0(M)$ has the sequential SD2P if $M'$ is infinite.
\end{example}

For the proof we will need the following lemma, which is an extension of \cite[Lemma 1.5]{ccgmr}.

\begin{lemma}\label{lematecnilipschitz}
Let $M$ be a pointed metric space and let $\{f_n\}$ be a sequence in $S_{\mathrm{Lip}_0(M)}$. Denote by $U_n:=\{x\in M:f_n(x)\neq 0\}$ for every $n\in\mathbb N$ and assume that, for every $n\in\mathbb N$, there are elements $x_n\notin \bigcup\limits_{k\in\mathbb N} U_k$ such that
$$d(y,x_n)< d(y,x)$$
holds for every $y\in U_n$ and every $x\in \bigcup\limits_{k\neq n}U_k$ (in particular the sequence $\{U_n\}$ is pairwise disjoint). Then the sequence $\{f_n\}$ is equivalent to the $c_0$ basis.
\end{lemma}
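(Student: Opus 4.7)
The plan is to prove the two-sided estimate
$$\max_{i}|a_i|\;\le\;\left\Vert \sum_{i}a_i f_i\right\Vert\;\le\;2\max_{i}|a_i|$$
for every finitely supported scalar family $(a_i)$, which yields that $\{f_n\}$ is $2$-equivalent to the canonical basis of $c_0$. Both estimates will be obtained by inspecting the Lipschitz quotient $|\sum_i a_i(f_i(u)-f_i(v))|/d(u,v)$ for arbitrary $u\neq v$ in $M$.

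For the upper bound I would split into cases according to whether $u$ and $v$ lie in some $U_k$ or in the complement $M\setminus\bigcup_{k}U_k$. When both points fall outside all the $U_k$'s, or in the same $U_k$, or one inside some $U_k$ and the other outside $\bigcup_{m}U_m$, only a single summand of the numerator survives and the quotient is trivially bounded by $\max_i|a_i|$ using $\Vert f_k\Vert=1$. The delicate case is $u\in U_k$, $v\in U_\ell$ with $k\neq\ell$. Here the key observation is that $f_k(x_k)=f_\ell(x_\ell)=0$ (since $x_k,x_\ell\notin\bigcup_m U_m$), which allows one to rewrite
$$a_k f_k(u)-a_\ell f_\ell(v)=a_k\bigl(f_k(u)-f_k(x_k)\bigr)-a_\ell\bigl(f_\ell(v)-f_\ell(x_\ell)\bigr),$$
estimate each term by $|a_i|\cdot d(\cdot,x_i)$ via the Lipschitz inequality, and then invoke the hypotheses $d(u,x_k)<d(u,v)$ and $d(v,x_\ell)<d(v,u)$ to conclude that the quotient is strictly less than $2\max_i|a_i|$.

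For the lower bound I would fix an index $j$ and $\varepsilon>0$, and use $\Vert f_j\Vert=1$ to pick $u\neq v$ with $|f_j(u)-f_j(v)|/d(u,v)>1-\varepsilon$. Since $f_j$ vanishes outside $U_j$, at least one of these two points, say $u$, lies in $U_j$. If $v\in U_\ell$ for some $\ell\neq j$, I would replace $v$ by $x_j$: this keeps the numerator unchanged (both $f_j(v)$ and $f_j(x_j)$ equal zero) while strictly shrinking the denominator, by the hypothesis $d(u,x_j)<d(u,v)$, so the quotient only increases. After this reduction, $v$ lies either in $U_j$ or outside $\bigcup_{k}U_k$; in either case $f_i(u)=f_i(v)=0$ for every $i\neq j$, and hence
$$\left\Vert\sum_{i} a_i f_i\right\Vert\;\ge\;\frac{|a_j|\,|f_j(u)-f_j(v)|}{d(u,v)}\;>\;(1-\varepsilon)|a_j|.$$
Letting $\varepsilon\to 0$ and taking the maximum over $j$ yields the lower bound.

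The main subtlety is the treatment of the cross-term $a_k f_k(u)-a_\ell f_\ell(v)$ in the upper bound when $u$ and $v$ belong to different supports; the separation hypothesis concerning the auxiliary points $x_n$ is designed precisely to tame this term by substituting $x_k,x_\ell$ at a cost of at most a factor $2$. The rest is direct bookkeeping with the Lipschitz seminorm, and both the reduction step in the lower bound and the case analysis in the upper bound are cleanly organised by the pairwise disjointness of the $U_n$'s, which is itself an immediate consequence of the separation hypothesis (any $y\in U_n\cap U_k$ with $k\neq n$ would force $d(y,x_n)<d(y,y)=0$).
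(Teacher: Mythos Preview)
Your argument is correct and follows essentially the same route as the paper's proof: the lower bound is handled identically (pick $u,v$ almost realising $\Vert f_j\Vert$, and when $v$ lands in a foreign support replace it by $x_j$, which kills all cross-terms while only improving the quotient), and the upper bound is obtained by the same case split on the location of $u$ and $v$. The only cosmetic difference is that the paper dispatches the upper bound by citing \cite[Lemma~1.5]{ccgmr}, which uses merely the pairwise disjointness of the $U_n$ (indeed $f_k(v)=0$ and $f_\ell(u)=0$ already give $|a_kf_k(u)-a_\ell f_\ell(v)|\le 2\max_i|a_i|\,d(u,v)$), whereas you invoke the auxiliary points $x_k,x_\ell$ there as well; this is harmless overkill but not needed for that half of the estimate.
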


\begin{proof}Consider $\lambda_1,\ldots, \lambda_N\in\mathbb R$ and let $g:=\sum_{i=1}^N \lambda_i f_i$. Since the sequence $\{U_n\}$ is pairwise disjoint then the proof of \cite[Lemma 1.5]{ccgmr} implies that
$$\Vert g\Vert\leq 2\max\limits_{1\leq i\leq N}\vert\lambda_i\vert.$$ 
In order to get an inequality from below consider $i\in\{1,\ldots, N\}$ such that $\vert\lambda_i\vert=\max\limits_{1\leq j\leq N}\vert\lambda_j\vert$. Now, since $\Vert f_i\Vert=1$, we can find $u,v\in M, u\neq v$ such that $\frac{f_i(u)-f_i(v)}{d(u,v)}>1-\varepsilon$. Notice that either $u$ or $v$ belongs to $U_i$. Assuming that $u\in U_i$ we have two possibilities:
\begin{enumerate}
\item $f_i(v)\neq 0$. This implies that $v\in U_i$ and, from the disjointness of the sequence $\{U_n\}$, we get that $f_j(u)=f_j(v)=0$ holds for every $j\neq i$. Consequently
$$\Vert g\Vert\geq \frac{\vert g(u)-g(v)\vert}{d(u,v)}=\vert \lambda_i\vert\frac{\vert f_i(u)-f_i(v)\vert}{d(u,v)}\geq \vert\lambda_i\vert(1-\varepsilon).$$
\item $f_i(v)=0$. Thus $f_i(u)>(1-\varepsilon)d(u,v)$. Now if $v\notin\bigcup\limits_{j\neq i}U_j$ then the same argument of (1) applies to get $\Vert g\Vert\geq (1-\varepsilon)\vert\lambda_i\vert$. On the other hand, if $v\in U_j$ for some $j\neq i$ we have
$$\Vert g\Vert\geq \frac{\vert g(u)-g(x_i)\vert}{d(u,x_i)}\geq \vert \lambda_i\vert \frac{f_i(u)}{d(u,x_i)}>\vert \lambda_i\vert \frac{(1-\varepsilon)d(u,v)}{d(u,x_i)}\geq \vert \lambda_i\vert(1-\varepsilon),$$
where the last inequality follows from the assumption on $x_i$.
\end{enumerate}
In any case, from the arbitrariness of $\varepsilon$ and the choice of $\lambda_i$ we get that
$$\max\limits_{1\leq i\leq n}\vert\lambda_i\vert\leq \left\Vert \sum_{i=1}^N\lambda_i f_i\right\Vert\leq 2\max\limits_{1\leq i\leq N}\vert\lambda_i\vert,$$
and the lemma is proved.
\end{proof}

\begin{proof}[Proof of Example~\ref{examplelipschitz}]
Notice that, since $M'$ is infinite, we can inductively construct a sequence of pairwise disjoint balls $\{B(x_n,r_n)\}$ in $M$ such that $x_n\in M'$ holds for every $n\in\mathbb N$. Notice now that we can assume, up considering close enough points to the center of the balls and considering smaller radii, that there exists $z_n\in M\setminus\bigcup\limits_{n\in \mathbb N} B(x_n,r_n)$ such that
$$d(z_n,x)< d(x,y)$$
holds for every $x\in B(x_n,r_n)$ and every $y\in B(x_m,r_m)$ with $n\neq m$. Now the result follows repeating word-by-word the proof of \cite[Lemma 5.4]{ccgmr} by working with a sequence of balls $B(x_n,r_n')$, for $r_n'<r_n$ small enough, and taking into account that the sequences involved in the proof are either null or equivalent to the $c_0$ basis in our case because of Lemma~\ref{lematecnilipschitz}.
\end{proof}

The following example is a generalisation of \cite[Theorem 2.2]{nw}, but the proof will be strongly based on its original proof.

\begin{example}
If $X\subseteq \mathcal C(K)$ is an infinite-dimensional uniform algebra, i.e. a closed subalgebra of $\mathcal C(K)$ which separates the points of $K$ and contains the constant functions, then $X$ enjoys the sequential SD2P.
\end{example}

Before beginning with the proof, let us introduce some notation used in \cite{nw}. Given a uniform algebra on a compact space $K$, a point $x\in K$ is said to be a \textit{strong boundary point} if, for every neighbourhood $V$ of $x$ and every $\delta>0$, there exists $f\in S_X$ such that $f(x)=1$ and $\vert f\vert<\delta$ on $K\setminus V$. The \textit{Silov boundary} of $X$, denoted by $\partial_X$ following the notation of \cite{gamelin}, is the closure of the set of all strong boundary points. It is a fundamental result of the theory of uniform algebras that $X$ can be indentified as an uniform algebra on its Silov boundary (see \cite{nw}). This fact allows us to assume, with no loss of generality, that the Silov boundary of $X$ is dense in $K$.

 \begin{proof}
First, assume that the set of isolated points of $K$ is infinite. Then there exists a sequence of different isolated points $\{x_n\}$ in $K$ and, since the set of strong boundary points is dense in $K$, then every $x_n$ is a strong boundary point. Now pick $f_1,\ldots, f_k\in S_X$. Given $i\in\{1,\ldots, k\}$ if the sequence $\{f_i(x_n)\}_n\rightarrow 0$ define $g_i^n=0$. Otherwise, define $g_i^n(x_n)=f_i(x_n)$ and $0$ otherwise (such construction can be done because $x_n$ is an isolated strong boundary point for every $n\in\mathbb N$). Notice that, since the functions of $\{g_i^n\}_n$ have disjoint support, then $\{g_i^n\}_n$ is either norm-null or equivalent to the $c_0$ basis. Now define $h_i^n:=f_i-g_i^n$, which verifies that $\Vert h_i^n\Vert\rightarrow 1$ and $h_i^n-f_i$ is either norm-null or equivalent to the $c_0$ basis. Now, defining $\phi_n:=\chi_{\{x_n\}}$ we have that $\Vert \phi_n\Vert=1$ for every $n\in\mathbb N$, $\{\phi_n\}$ is isometric to the $c_0$ basis since the supports are pairwise disjoint and, by construction, $\Vert h_i^n\pm \phi_n\Vert\rightarrow 1$.

Now assume that the set of isolated points is finite, then $K'$ is clopen and perfect. Thus we can find a sequence of pairwise disjoint open sets $\{V_n\}\subseteq K'$, which are still open in $K$. By the denseness of the set of strong boundary points we can choose $t_n\in V_n$ being a strong boundary point for every $n\in\mathbb N$. Pick $f_1,\ldots, f_k\in S_X$. Given $i\in\{1,\ldots, k\}$, if $\{f_i(t_n)\}\rightarrow 0$ then define $g_n^i$ as the constant $0$ for every $n\in\mathbb N$ and $\delta_i=1$; otherwise assume, up taking a further subsequence, that $\vert f_i(t_n)\vert\geq \delta_i$, for some $\delta_i>0$. Define $\delta:=\min\limits_{1\leq i\leq k} \delta_i$, choose $0<\varepsilon<\frac{\delta}{2}$ and find a sequence of positive numbers $\{\varepsilon_n\}$ such that $\sum_{n=1}^\infty \varepsilon_n<\varepsilon$. Define, for every $i\in\{1,\ldots, k\}$ such that $f_i(t_n)$ is not null, by using that $t_n$ is a strong boundary point, a function $g_n^i\in X$ such that $\Vert g_n^i\Vert=\vert f_i(t_n)\vert$, $g_n^i(t_n)=f_i(t_n)$ and that $\vert g_n^i(t)\vert<\varepsilon_n$ holds for every $t\in K\setminus V_n$. Finally consider $h_n^i:=f_i-g_n^i$ for every $i\in\{1,\ldots, k\}$ and every $n\in\mathbb N$. It is not difficult to prove that $\Vert h_n^i\Vert\rightarrow 1$ for every $i\in\{1,\ldots, k\}$. Furthermore, given $i\in\{1,\ldots, k\}$, then $g_n^i=f_i-h_n^i$ is either norm-null (in the case that $f_i(t_n)\rightarrow 0$) or equivalent to the $c_0$ basis (otherwise). In fact, given $i\in\{1,\ldots, k\}$ such that $\vert f_i(t_n)\vert\geq \delta_i>0$, select an arbitrary $p\in\mathbb N$ and $\lambda_1,\ldots, \lambda_p\in \mathbb R$. Define $g:=\sum_{j=1}^p\lambda_j g_j^i$. For a given $t\in K$ we have two possibilities:
\begin{enumerate}
\item There is $q\in\{1,\ldots, k\}$ such that $t\in V_q$. Then, by the construction, $\vert g_j^i(t)\vert\leq \varepsilon_j$ for every $j\neq q$. Consequently, the triangle inequality implies that
\[\begin{split}
\vert g(t)\vert\leq \sum_{j=1}^n \vert \lambda_i\vert \vert g_j^i(t)\vert& \leq \max\limits_{1\leq j\leq p} \vert \lambda_j\vert \left( \vert g_q^i(t)\vert+\sum_{j\neq q}\varepsilon_j\right )\\ & \leq (1+\varepsilon)\max\limits_{1\leq j\leq p}\vert \lambda_j\vert. 
\end{split}
\]
\item If $t\notin\bigcup\limits_{j=1}^p V_j$ then $\vert g_j^i(t)\vert\leq \varepsilon_j$ holds for every $j\in\{1,\ldots, p\}$, and so similar estimates to the ones of the previous case yields that
$$\vert g(t)\vert\leq \max\limits_{1\leq j\leq p}\vert \lambda_j\vert\varepsilon.$$
\end{enumerate}
Hence, taking supremum in $t\in K$, we get that $\Vert \sum_{j=1}^p\lambda_j g_j^i\Vert\leq (1+\varepsilon)\max\limits_{1\leq j\leq p}\vert \lambda_j\vert$. In order to get an inequality from below choose $q\in\{1,\ldots, p\}$ such that $\vert \lambda_q\vert=\max\limits_{1\leq j\leq p} \vert\lambda_j\vert$. Then
\[\begin{split}
\vert g(t_q)\vert\geq \vert \lambda_q\vert\vert g_q^i(t_q)\vert-\sum_{j\neq q}\vert \lambda_j\vert \vert g_j^i(t_q)\vert& \geq \vert\lambda_q\vert \delta-\vert \lambda_q\vert \sum_{j\neq q} \varepsilon_j\\
& \geq\vert \lambda_q\vert(\delta-\varepsilon)\\
& \geq\max\limits_{1\leq j\leq p}\vert\lambda_j\vert \frac{\delta}{2}.
\end{split}\]
Consequently
$$\frac{\delta}{2}\max\limits_{1\leq j\leq p} \vert\lambda_j\vert\leq \left\Vert \sum_{j=1}^p\lambda_j g_j^i\right\Vert\leq (1+\varepsilon)\max\limits_{1\leq j\leq p} \vert\lambda_j\vert,$$
which proves that the sequence $\{g_j^i\}_j$ is equivalent to the $c_0$ basis.

For every $n\in\mathbb N$, we can find from the continuity of $h_n^i$ an open set $W_n$ such that $t_n\in W_n\subseteq V_n$ and such that $\vert h_n^i(t)-h_n^i(t_n)\vert<\varepsilon_n$ holds for every $t\in W_n$ and every $i\in\{1,\ldots, k\}$. Using again that $t_n$ is a strong boundary point find $\phi_n\in S_X$ such that $\phi(t_n)=1$ and $\vert \phi_n(t)\vert<\varepsilon_n$ holds for all $t\in K\setminus W_n$ and $n\in\mathbb N$. It is clear that $\{\phi_n\}$ is a sequence equivalent to the $c_0$ basis by a similar argument to that of the sequence $\{g_j^i\}_j$ given above. Let us prove that, for a given $i\in\{1,\ldots, k\}$, then $\Vert h_n^i\pm \phi_n\Vert\rightarrow 1$. Given $n\in\mathbb N$ and $t\in K$ we have two possibilities:
\begin{enumerate}
\item If $t\notin W_n$ then $\vert h_n^i(t)\pm \phi_n(t)\vert\leq \Vert h_n^i\Vert+\varepsilon_n$.
\item If $t\in W_n$ then $\vert h_n^i(t)\pm \phi_n(t)\vert\leq 1+\vert h_n^i(t_n)\vert+\vert h_n^i(t)-h_n^i(t_n)\vert<1+\varepsilon_n+\vert h_n^i(t_n)\vert$.
\end{enumerate}
So, taking supremum in $t$, we get that
$$\Vert h_n^i\pm \phi_n\Vert\leq \max\{\Vert h_n^i\Vert+\varepsilon_n, 1+\varepsilon_n+\vert h_n^i(t_n)\vert\}.$$
From the construction of the sequence $\{h_n^i\}$ it follows that the previous maximum tends to $1$, as desired.
\end{proof}

\end{document}